\setlist[enumerate]{leftmargin=.5in}
\setlist[itemize]{leftmargin=.5in}
\crefname{hypothesis}{Hypothesis}{Hypotheses}
\title{Fast Approximation of the Gauss-Newton Hessian Matrix for the Multilayer Perceptron
}
\author{Chao Chen\thanks{University of Texas at Austin, United States (\email{chenchao.nk@gmail.com}, \email{biros@oden.utexas.edu}).}
\and Severin Reiz\thanks{Technical University of Munich, Germany (\email{s.reiz@tum.de}, \email{bungartz@tum.de}).}
\and Chenhan Yu\thanks{Nvidia Corp., United States (\email{b94201001@gmail.com}).}
\and Hans-Joachim Bungartz\footnotemark[3]
\and George Biros\footnotemark[2]}
\DeclareMathOperator{\bigO}{\mathcal{O}}
\DeclareMathOperator{\bigOT}{{\mathcal{O}}}
\DeclarePairedDelimiter\ceil{\lceil}{\rceil}
\newcommand{\rar}{$\rightarrow$}
\newcolumntype{H}{>{\setbox0=\hbox\bgroup}c<{\egroup}@{}}
\newcommand{\gofmm}{\texttt{GOFMM}}
\newcommand{\hmatrix}{\ensuremath{\mathcal{H}}-matrix}
\newcommand{\hmatrices}{\ensuremath{\mathcal{H}}-matrices}
\newcommand{\gl}{g^{\ell}}
\newcommand{\xl}{x^{\ell}}
\newcommand{\xm}{x^{\ell-1}}
\newcommand{\xL}{x^{L}}
\newcommand{\zl}{z^{\ell}}
\newcommand{\zm}{z^{\ell-1}}
\newcommand{\zL}{z^{L}}
\newcommand{\E}{\mathbb{E}}
\newcommand{\Var}{\mathrm{Var}}
\newcommand{\hxl}{\hat{x}^{\ell}}
\newcommand{\hxm}{\hat{x}^{\ell-1}}
\newcommand{\hxL}{\hat{x}^{L}}
\newcommand{\hzl}{\hat{z}^{\ell}}
\newcommand{\hzm}{\hat{z}^{\ell-1}}
\newcommand{\hzL}{\hat{z}^{L}}
\newcommand{\hwl}{\hat{W}_{\ell}}
\newcommand{\wl}{W_{\ell}}
\newcommand{\wlt}{W^{T}_{\ell}}
\newcommand{\ml}{M^{\ell}}
\newcommand{\nomen}[1]{{{#1}}} 
\newcommand{\red}[1]{{#1}}
\newcommand{\p} {\partial}
\newcommand{\figref}[1]{Figure~\ref{#1}}
\newcommand{\tabref}[1]{Table~\ref{#1}}
\newcommand{\secref}[1]{\S\ref{#1}}
\newcommand{\lc}{{\tt l}}  
\newcommand{\rc}{{\tt r}}  
\newcommand{\zapspace}{\topsep=0pt\partopsep=0pt\itemsep=0pt\parskip=0pt}
\newcommand{\accnum}{\num[scientific-notation=true, round-mode=places, round-precision=1]}
\newcounter{magicrownumbers}
\newcommand\rownumber{\refstepcounter{magicrownumbers}\arabic{magicrownumbers}}
\begin{document}

\maketitle

\begin{abstract}
We introduce a fast algorithm for entry-wise evaluation of the Gauss-Newton Hessian (GNH) matrix for the fully-connected feed-forward neural network. The algorithm has a precomputation step and a sampling step. While it generally requires $\bigO(Nn)$ work to compute an entry (and the entire column) in the GNH matrix for a neural network with $N$ parameters and $n$ data points, our fast sampling algorithm reduces the cost to $\bigO(n+d/\epsilon^2)$ work, where $d$ is the output dimension of the network and $\epsilon$ is a prescribed accuracy (independent of $N$). One application of our algorithm is constructing the hierarchical-matrix (\hmatrix{}) approximation of the GNH matrix for solving linear systems and eigenvalue problems. It generally requires $\bigO(N^2)$ memory and $\bigO(N^3)$ work to store and factorize the GNH matrix, respectively. The \hmatrix{} approximation requires only $\bigO(N r_o)$ memory footprint and $\bigO(N r_o^2)$ work to be factorized, where $r_o \ll N$ is the maximum rank of off-diagonal blocks in the GNH matrix. We demonstrate the performance of our fast algorithm and the \hmatrix{} approximation on classification and autoencoder neural networks.
\end{abstract}

\begin{keywords}	
  Gauss-New Hessian, Fast Monte Carlo Sampling, Hierarchical Matrix, Second-order Optimization, Multilayer Perceptron
\end{keywords}

\begin{AMS}
  65F08, 
  62D05 
\end{AMS}

\section{Introduction} \label{s:intro} 

Consider a multilayer perceptron (MLP) with \nomen{$L$} fully connected layers and $n$ data pairs $\{(x^0_i, y_i)\}_{i=1}^n$, where $y_i$ is the label of $x^0_i$. Given input data point $x^0_i \in \mathbb{R}^{d_0}$, the output of the MLP is computed via the forward pass:
\begin{equation}
\label{e:layer}
  \nomen{\xl_i = s(\wl \, \xm_i), \quad  \ell = 1,\ldots ,L} 
\end{equation}
where $ \xl_i \in \mathbb{R}^{d_\ell},\  \wl \in \mathbb{R}^{d_\ell\times d_{\ell-1}}$ and \nomen{$s$} is a nonlinear activation function applied to every entry of the input vector. Without loss of generality, Eq.~\eqref{e:layer} does not have bias parameters. Otherwise, bias can be included in the weight matrix $\wl$, and correspondingly vector $\xl_i$ is appended with an additional homogeneous coordinate of value one.  For ease of presentation, we assume constant layer size, i.e., $d_\ell \equiv d$, for $\ell =0,1,2,\ldots,L$, so the total number of parameters is $N= d^2L$. Define the weight vector consisting of all weight parameters concatenated together as 
\[
w = [\texttt{vec}(W_{1}),\texttt{vec}(W_{2}),\ldots,\texttt{vec}(W_{L})],
\] 
where $w \in \mathbb{R}^N$ and $\texttt{vec}$ is the operator vectorizing matrices.

Given a loss function \nomen{$f(\xL_i,y_i)$}, which measures the misfit between the network output and the true label, we define 
\[
F(w) = {1 \over n} \sum_{i=1}^n f\left(\xL_i, y_i\right)
\] 
as the loss of the MLP with respect to the weight vector $w$. Note $\xL_i$ is a function of the weights $w$.

\begin{definition}[(Generalized) Gauss-Newton Hessian]
Let $Q_i = {1 \over n} \, \p^2_{\!xx} f(\xL_i, y_i) $ be the Hessian of the loss function $f(\xL_i, y_i)$ for $i=1,2,\ldots,n$, and define $Q  \in \mathbb{R}^{dn \times dn}$ as a block diagonal matrix with $Q_i$ being the $i_\mathrm{th}$ diagonal block. Let $J_i = \p_{\!w} \xL_i \in \mathbb{R}^{d \times N}$ be the Jacobian of $\xL_i$ with respect to the weights $w$ for $i=1,2,\ldots,n$, and define $J \in \mathbb{R}^{d n \times N}$ be the vertical concatenation of all $J_i$. The (generalized) Gauss-Newton Hessian (GNH) matrix $H \in \mathbb{R}^{N \times N}$ associated with the loss $F$ with respect to the weights $w$ is defined as
\begin{align} \label{e:derivatives}
 \nomen{H} = J^T Q J = \sum_{i=1}^n J_i^T  Q_i J_i.
\end{align}
\end{definition}

The GNH matrix is closely related to the Hessian matrix in that it is the Hessian matrix of a particular approximation of $F(w)$ constructed by replacing $\xL_i$ with its first-order approximation (on weights $w$)~\cite{martens16}. Importantly, the GNH matrix is always (symmetric) positive semi-definite when the loss function $f(\xL_i,y_i)$ is convex in $\xL_i$ ($Q_i$ is positive semi-definite), a useful property in many applications. In addition, for several standard choices of the loss function, the GNH matrix is mathematically equivalent to the \emph{Fisher matrix} as used in the natural gradient method.

This paper is concerned with fast entry-wise evaluation of the GNH matrix. Such an algorithmic primitive can be used in constructing approximations of the GNH matrix for solving linear systems and eigenvalue problems, which are useful for training and analyzing neural networks~\cite{byrd-e11,martens16,bottou-nocedal18,o2019inexact}, for selecting training data to minimize the inference variance~\cite{cohn94}, for estimating learning rates~\cite{lecun-bottou98}, for network pruning~\cite{hassibi-stork93}, for robust training~\cite{yao2018hessian}, for probabilistic inference~\cite{hennequin-e14}, for designing fast solvers~\cite{carmon-duchi18,triburaneni-jordan18,gower-roux-bach17} and so on.


\subsection{Previous work} 
We classify related work into two groups. One group avoids entry-wise evaluation of the GNH matrix and relies on the matrix-vector multiplication (matvec) with the Hessian or the GNH that is matrix-free~\cite{martens2010deep,martens2011learning,martens16}. For example, the matrix-free matvec can be used to construct low-rank approximations of the GNH matrix through the randomized singular value decomposition (RSVD)~\cite{halko-martinsson-tropp11}, but the numerical rank may not be small~\cite{keutzer-e17,dinh-bengio-e17}. Other examples are the following: \cite{dauphin-bengio-e14} introduces a low-rank approximation using the Lanczos algorithm to tackle saddle points; \cite{leroux-e08} maintains a low-rank approximation of the inverse of the Hessian based on rank-one updates at each optimization step; \cite{gower-roux-bach17} uses a quasi-Newton-like construction of the low-rank approximation; \cite{ye-zhang-luo17, mahoney16} study the convergence of stochastic Newton methods combined with a randomized low-rank approximation; \cite{yao2018hessian} uses a matrix-free method with only the layers near the output layer.

The other group of methods are based on evaluating or approximating entries on or close to the diagonal of the GNH matrix~\cite{lafond-bottou17}. For example, \cite{zhang-socher-e17} introduces a recursive fast algorithm to construct block-diagonal approximations. As another example, \cite{grosse-martens15,martens16} introduce the Kronecker-factored approximate curvature (K-FAC), which is based on an entry-wise approximation of the \emph{Fisher matrix} (mathematically equivalent to the GNH for some popular loss functions). The Fisher matrix is given by $ \nicefrac{1}{n} \sum_{i=1}^n \mathbb{E}_y [g_i(y) g_i(y)^T]$, where $g_i$ is the gradient evaluated for the $i_\mathrm{th}$ training point $x^0_i$, and $y$ is sampled from the network's predictive distribution $\propto \exp(-f(\xL_i,y))$. In practice, an extra step of block-diagonal or block-tridiagonal approximation is used for fast inversion purpose. The method has been tested within optimization frameworks on modern supercomputers and has been shown to perform well~\cite{osawa-yokota-satoshi-e18}. However, the sampling in the K-FAC algorithm converges slowly, and block-diagonal approximations do not account for off-diagonal information.


\subsection{Contributions}
In this paper, we introduce a fast algorithm for entry-wise evaluation of the GNH matrix  $H$, i.e., computing
\[
H_{km} = e_k^T \, H \, e_m,
\] 
where $e_k$ and $e_m$ are two canonical bases for $k,m=1,2,\ldots,N$. With the fast evaluation, we propose the hierarchical-matrix (\hmatrix{}) approximation~\cite{bebendorf08,hackbusch15} of the GNH matrix for the MLP network, which has applications in autoencoders, long-short memory networks, and is often used to study the potential of second-order training methods. Notice if the matrix-free matvec is used to evaluate $H_{km}$, the computational cost would be $\bigO(Nn)$.

Our fast algorithm includes a precomputation step and a sampling step, which reduces the cost to $\bigO(n+d)$ work (independent of $N$), where $d$ is the output dimension of the network. To illustrate the idea, suppose the network employs the mean squared loss, i.e., $f(\xL_i,y_i)={1\over 2}\|\xL_i-y_i\|^2$, and therefore, the GNH matrix is $H= {1 \over n} J J^T$, where $J \in \mathbb{R}^{d n \times N}$ is the Jacobian of the network output with respect to the weights. Then $H_{km} = {1 \over n} (J e_k)^T  \, (Je_m)$, and only columns in the Jacobian are required to be computed. Our precomputation algorithm exploits the structure of a feed-forward neural network, where the gradient is back propagated layer by layer, so the intermediate results effectively form a compressed format of the Jacobian with $\bigO(Nn)$ memory. As a result, every column can be retrieved in only $\bigO(nd)$ time (note every column has $\bigO(nd)$ entries).

To accelerate the computation of $H_{km}$, we introduce a fast Monte Carlo sampling algorithm. Let $v_k(i)$ denote the sub-vector in the Jacobian's $k_\mathrm{th}$ column corresponding to the $i_\mathrm{th}$ data point, and therefore, $H_{km} =  {1 \over n} \sum_{i=1}^n v_k(i)^T v_m(i)$. In the sampling, we draw $c$ (independent of $n$) independent samples $t_1, t_1, \ldots, t_c$ from $\{1,2,\ldots,n\}$ with a carefully designed probability distribution $P_{km}$ and compute an estimator 
\[
\tilde{H}_{km} = \frac{1}{nc}\sum_{j=1}^{c} \frac{v_k( t_j )^T v_m( t_j )}{P_{km}(t_j)}.
\] 
We prove $| H_{km} - \tilde{H}_{km}| = \bigO(1/\sqrt{c})$ with high probability. Note it requires only $\bigO(n+dc)$ work to compute $\tilde{H}_{km}$ as an approximation, where $d$ is the output dimension of the network.

With the fast evaluation algorithm, we are able to take advantage of the existing \gofmm{} method~\cite{chenhan-biros-e17,yu-reiz-biros18,gofmm-home-page} to construct the \hmatrix{} approximation of the GNH matrix through evaluating $\bigOT(N)$ entries in the matrix. The \hmatrix{} approximation is a multilevel scheme that stores diagonal blocks and employs low-rank approximations for off-diagonal blocks in the input matrix. So previous work on the (global) low-rank approximation and the block-diagonal approximation can be viewed as the two extremes in the spectrum of our \hmatrix{} approximation, which effectively works for a broader range of problems. \hmatrices{} are algebraic generalizations of the well-known fast $n$-body calculation algorithms~\cite{barnes-hut-86,greengard94} in computational physics, and they have been applied to kernel methods in machine learning~\cite{lee-gray08,march-xiao-yu-biros-sisc16}. An \hmatrix{} can be formulated as 
\begin{equation}\label{e:hmat}
\nomen{H = D + S + U V^T}
\end{equation}
where $U$ and $V$ are tall-and-skinny matrices, $S$ is a block-sparse matrix, and $D$ is a block-diagonal matrix with the blocks being either smaller \hmatrices{} at the next level or dense blocks at the last level. \figref{f:hmat} shows the structure of a low-rank matrix and the hierarchically low-rank structure of \hmatrices{}.

\begin{figure}[htbp]
\begin{center}
\includegraphics[width=0.69\textwidth]{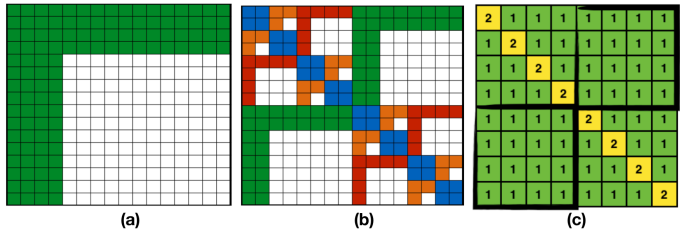}
\caption{\em {\bf (a)} A low-rank matrix $H=UV^T$, where $U$ and $V$ are tall-and-skinny matrices; {\bf (b)} a three-level \hmatrix{}, where blue represents dense diagonal blocks, and green, red and orange represent off-diagonal low-rank blocks at level 1, 2 and 3, respectively; {\bf (c)} ranks of diagonal and off-diagonal blocks in an \hmatrix{}, where every block has size 2-by-2.}
\label{f:hmat}
\end{center}
\end{figure}

Given an \hmatrix{} approximation, the memory footprint is $\bigOT(N r_o)$\footnote{Generally speaking, there may be a $\log(N)$ or $\log^2(N)$ prefactor, as for other complexity results related to \hmatrix{} approximations. But here we focus on the case without such prefactors.}, where $N$ is the matrix size or the number of weights in a network and $r_o$ is the maximum off-diagonal rank. Compared to the $\bigO(N^2)$ storage for the entire matrix, an \hmatrix{} approximation leads to significant memory savings. Once constructed, an \hmatrix{} can be factorized with only $\bigOT(N r_o^2)$ work, and there exists an entire class of well-established numerical techniques~\cite{martinsson2005fast,xia2010fast,ho2013hierarchical,ghysels-li-e16,aminfar2016fast,chen2018distributed,takahashi2019parallelization}. The factorization can be applied to a vector with $\bigOT(N r_o)$ work and be used as either a fast direct solver or a preconditioner depending on the approximation accuracy.

To summarize, our work makes the following two major contributions:
\begin{itemize}
\item a fast algorithm that requires $\bigO(Nn)$ storage and requires $\bigO(n+d/\epsilon^2)$ work to evaluate an arbitrary entry in the GNH matrix, where $N$ and $d$ are the number of parameters and the output dimension of the MLP, respectively, $n$ is the data size, and $\epsilon$ is a prescribed accuracy.
\item a framework to construct the \hmatrix{} approximation of the GNH matrix, an analysis of the approximation accuracy and the cost, as well as comparison with the RSVD and the K-FAC methods. 
\end{itemize}

\paragraph{Outline}  
In \secref{s:setup} we review some background material. In \secref{s:algorithms} we present our fast algorithm for evaluating entries in the GNH matrix. In \secref{s:gofmm} we show how to construct the \hmatrix{} approximation of the GNH matrix. In \secref{s:results} we show numerical results, and in \secref{s:conclusion} we conclude with further extensions.
Throughout this paper, we use $\|\cdot\|$ to denote the vector/matrix 2-norm and  $\|\cdot\|_F$ to denote the matrix Frobenius norm.


\section{Background} \label{s:setup} 

In this section, we review the importance of the GNH matrix and the associated computational challenge. The GNH matrix is useful in training and analyzing neural networks, selecting training data, estimating learning rate, and so on. Here we focus on its use in second-order optimization to show the challenge that is common in other applications. 

\subsection{Neural network training}

In an MLP, the weight vector $w$ is obtained via solving the following constrained optimization problem (regularization on $w$ could be added):
\begin{equation}\label{e:opt}
  \begin{split}
     \min F(w) = \min {1\over n} \sum_{i=1}^n f\left(\xL_i, y_i\right) \\
    \quad \mbox{subject to Eq.~\eqref{e:layer}}.
  \end{split}
\end{equation}
Recall that $f$ is the loss function, $\xL_i$ is the network output corresponding to input $x^0_i$, which has label $y_i$.

To solve for $w$ in problem~\eqref{e:opt}, a second-order optimization method solves a sequence of local quadratic approximations of $F(w)$, which requires solving the following linear systems repeatedly:
\begin{equation} \label{e:system}
H p = -g,
\end{equation}
where $H$ is the \emph{curvature matrix} (the Hessian of $F(w)$ in the standard Newton's method), $g = \p_{\!w}  F$ is the gradient, and $p$ is the update direction. Generally speaking, second-order optimization methods are  highly concurrent and could require much less number of iterations to converge than first-order methods, which imply potentially significant speedup on modern distributed computing platforms. 

In the Gauss-Newton method, a popular second-order solver, the GNH matrix is employed (with a small regularization) as the curvature matrix in Eq.~\eqref{e:system}, which can be solved using the Conjugate Gradient method. Since the GNH is mathematically equivalent to the Fisher matrix for several standard choices of the loss, and then the solution of Eq.~\eqref{e:system} becomes the \emph{natural gradient}, a efficient steepest descent direction in the space of probability distribution with an appropriately defined distance measure~\cite{martens2014new}.


\subsection{Back-propagation \& matrix-free matvec}

\begin{table}[tbhp]
{\footnotesize
  \caption{\red{Gradient evaluation and matrix-free matvec with the GNH matrix. Step (a) of gradient evaluation is the forward pass in Eq.~\eqref{e:layer}, and step (b) and (c) are the well-known back-propagation. In the matvec, step (a) is known as the linearized forward ($\hat{x}_i^0 = 0$), which computes $J \hat{w}$. Notations: 
$\ml_i = \mathrm{diag}\left(\dot{s}(\wl \, \xm_i)\right)$, where $\dot{s}$ stands for the derivative;
  $\gl$ is the gradient for $\wl$; 
  $\hat{w} = [\texttt{vec}(\hat{W}_{1}),\ldots,\texttt{vec}(\hat{W}_{L})]$ is the input of the matvec;
  $(H\hat{w})^{\ell}$ refers to the $\ell_\mathrm{th}$ block of the output.}
  }\label{tab:cg}
\begin{center}
  \begin{tabular}{|c|c|} \hline
  Evaluate gradient $g$  & Matvec with GNH: $H\hat{w} = J^T Q J \hat{w}$ (Definition 1.1) \\ 
  & \\
  \arraycolsep=1pt\def\arraystretch{1.3}
 $ \begin{array}{lll}
\mbox{(a)}\  \xl_i &= s\left(\wl \, \xm_i\right) & \forall i, \ell   \\
\mbox{(b)}\ \zL_i &= \red{\p_{\!x} f(\xL_i, y_i),} &  \forall i\\
\mbox{(c)}\ \zm_i &= \wlt\ml_i \zl_i& \forall i,  \ell  \\
\mbox{(d)}\ \nomen{\gl}  &= \sum_{i=1}^n \left(\ml_i \zl_i\right)\, (x_i^{\ell-1})^T & \forall \ell\\
  \end{array}$
   &
   \arraycolsep=1pt\def\arraystretch{1.3}
  $\begin{array}{llll} \label{e:matvec}
\mbox{(a)}\  \hxl_i &= \ml_i \left( \wl \hxm_i + \hwl \xm_i \right) &  \forall i, \ell  \\
\mbox{(b)}\ \hzL_i &= Q_i \hxL_i &\forall i  \\
\mbox{(c)}\  \hzm_i &= \wlt\ml_i \hzl_i &\forall i, \ell \\
\mbox{(d)}\ \nomen{(H\hat{w})^{\ell}} &= \sum_{i=1}^n \left(\ml_i \hzl_i\right)\, (x_i^{\ell-1})^T &\forall \ell \\
  \end{array}$
   \\ \hline
  \end{tabular}
\end{center}
}
\end{table}

\cref{tab:cg} shows the algorithm known as back-propagation for evaluating the gradient $g = \p_{\!w} F$ and the matrix-free matvec with the GNH matrix, both of which have complexity $\bigO(N n)$. Both algorithms can be derived by introducing Lagrange multipliers $\zl_i$ and $\hzl_i$ for the corresponding weights $\wl$ at every layer~\cite{gill-murray-wright-81,goodfellow16}. Note a direct matvec with the full GNH matrix would require $\bigO(N^2)$ work, not even mentioning the amount of work to compute the entire matrix.

Based on the two basic ingredients, iterative solvers such as Krylov methods can be used to solve Eq.~\eqref{e:system} as in Hessian-free methods~\cite{martens2010deep,martens2011learning}. However, the iteration count for convergence can grow rapidly in the presence of ill-conditioning, in which case fast solvers or preconditioners for Eq.~\eqref{e:system} are necessary~\cite{axelsson-94,knoll-keyes-04,martens16}.

\section{Fast computation of entries in GNH} \label{s:algorithms} 

This section presents a precomputation algorithm and a fast Monte Carlo algorithm for fast computation of \emph{arbitrary} entries in the GNH matrix of an MLP network.


\paragraph{A naive method} Consider a GNH matrix $H \in \mathbb{R}^{N \times N}$, where an entry $H_{km}$ can be written as 
\begin{equation} \label{e:hkm}
H_{km} = e_k^T \, H \, e_m, 
\end{equation}
where $e_k$ and $e_m$ are the $k_\mathrm{th}$ and the $m_\mathrm{th}$ columns of the $N$-dimensional identity matrix. We can take advantage of the matrix-free matvec with the GNH matrix in \cref{tab:cg} to compute $H_{km} = e_k^T (H e_m)$, which costs the same as one pass of forward propagation plus one pass of backward propagation, i.e., $\bigO(N n) = \bigO(d^2 L n)$ work. 

In the following, we introduce a precomputation algorithm that reduces the cost of evaluating an entry in the GHN to $\bigO(d n)$ work with $\bigO(N n)$ memory, and a fast Monte Carlo algorithm that further reduces the cost to $\bigO(n+d/\epsilon^2)$ work, where $\epsilon$ is a prescribed accuracy that does not depend on $n$ nor $N$. 

\subsection{Precomputation algorithm} \label{s:precompute}
The motivation of our precomputation algorithm is to exploit the sparsity of $e_k$ and $e_m$ plus the symmetry of $H$ in Eq.~\eqref{e:hkm}. Recall the definition of $H$ in Eq.~\eqref{e:derivatives}, and let $Q_i = R_i^TR_i$ be a symmetric factorization, which can be computed via, e.g., the eigen-decomposition or the LDLT factorization with pivoting. We have
\begin{equation}\label{e:jachess}
\arraycolsep=1pt\def\arraystretch{1.3}
\begin{array}{ll}
H_{km}
&= e_k^T \big(\sum_{i=1}^n J_i^T  R_i^T R_i J_i \big) e_m 
= \sum_{i=1}^n (R_i J_i e_k)^T (R_i J_i e_m)  \\
&:= \sum_{i=1}^n v_k(i)^T v_m(i)
\end{array}
\end{equation}
where $v_k(i)$ and $v_m(i)$ are two $d$-dimensional vectors:
\begin{equation} \label{e:vkm}
v_k(i) = R_i J_i e_k, \quad v_m(i) = R_i J_i e_m,
\end{equation}
for $k,m=1,2,\ldots,N$ and $i=1,2,\ldots,n$. We state the following theorem and present the precomputation algorithm in the proof.

\begin{theorem}\label{th:th1} For an MLP network that has $L$ fully connected layers with constant layer size $d$ ($d$-by-$d$ weight matrices), every entry $H_{km}$ in the GNH matrix can be computed in $\bigO(d n)$ time with a precomputation that requires $\bigO(n N)$ storage and $\bigO(d n N)$ work.
\end{theorem}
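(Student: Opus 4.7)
The plan is to exploit two facts in sequence. First, from the symmetric factorization $Q_i = R_i^T R_i$ and Eq.~\eqref{e:jachess}, an entry $H_{km}$ is just a sum of $n$ inner products of the $d$-vectors $v_k(i) = R_i J_i e_k$; so if every $v_k(i)$ could be produced in $\bigO(d)$ work once precomputation is done, the $\bigO(dn)$ retrieval bound follows immediately. Second, the MLP structure implies each column of $J_i$ has a rank-one form given by the chain rule, so an entire column need never be stored explicitly: it can be reconstructed on demand from small per-layer, per-sample quantities.

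For the compressed representation, let $a^\ell_i = W_\ell x^{\ell-1}_i$ be the pre-activations and define the output-to-preactivation Jacobian $\tilde{B}^\ell_i = \partial \xL_i / \partial a^\ell_i \in \mathbb{R}^{d\times d}$. A perturbation of weight $(W_\ell)_{ab}$ changes $a^\ell_i$ by $e_a (x^{\ell-1}_i)_b$ and nothing upstream, so if the index $k$ corresponds to the triple $(\ell_k,a_k,b_k)$ then
\[
 J_i e_k \;=\; \tilde{B}^{\ell_k}_i\, e_{a_k}\,(x^{\ell_k-1}_i)_{b_k},
 \qquad
 v_k(i) \;=\; Y^{\ell_k}_i\,e_{a_k}\,(x^{\ell_k-1}_i)_{b_k},
\]
where $Y^\ell_i := R_i \tilde{B}^\ell_i \in \mathbb{R}^{d\times d}$. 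That is, $v_k(i)$ is a single column of the precomputed matrix $Y^{\ell_k}_i$ scaled by one scalar from the precomputed activations.

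The precomputation therefore stores $\{Y^\ell_i\}$ together with $\{x^{\ell-1}_i\}$ for all $i$ and $\ell$. The storage is $nL$ matrices of size $d^2$ plus $nL$ vectors of size $d$, i.e.\ $\bigO(nLd^2) = \bigO(nN)$. The work breaks down as: (i) a symmetric factorization of each $Q_i$, $\bigO(nd^3) \le \bigO(dnN)$; (ii) the forward pass giving $x^\ell_i$ and $\ml_i$, $\bigO(nN)$; (iii) the backward recursion $\tilde{B}^\ell_i = \tilde{B}^{\ell+1}_i\,\wp\,\ml_i$ initialized at $\tilde{B}^L_i = M^L_i$, which is one $d\times d$ multiply per $(i,\ell)$, totaling $\bigO(nLd^3) = \bigO(dnN)$; and (iv) forming $Y^\ell_i = R_i \tilde{B}^\ell_i$, which is the same cost. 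Retrieval of $H_{km}$ then decodes $k,m$ into their layer/row/column triples, reads the required columns of $Y^{\ell_k}_i$ and $Y^{\ell_m}_i$ together with the two scalar activation entries, forms the two scaled $d$-vectors, and accumulates their inner product over $i=1,\ldots,n$, for a total of $\bigO(dn)$ work.

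The main non-routine step is the chain-rule identification of $\tilde{B}^\ell_i$ as the right "compressed Jacobian" factor to store, together with the observation that pairing it with $R_i$ in advance shifts the $Q_i$-work out of the per-entry cost. Everything else is a direct accounting argument; the only subtlety worth double-checking is that the index-to-$(\ell,a,b)$ decoding is consistent with the vectorization convention for $w$, so that the claimed $\bigO(d)$ per-sample cost really reduces to scaling one column rather than performing another matrix-vector product.
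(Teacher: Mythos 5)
Your proposal is correct and follows essentially the same route as the paper: your $Y^\ell_i = R_i\,\tilde{B}^\ell_i$ is exactly the paper's precomputed $C_i^\ell = R_i M^L_i W_L \cdots M_i^\ell$, your backward recursion $\tilde{B}^\ell_i = \tilde{B}^{\ell+1}_i W_{\ell+1} M^\ell_i$ is the paper's $C_i^{\ell-1} = C_i^\ell (W_\ell M_i^{\ell-1})$, and the rank-one/chain-rule identification of $v_k(i)$ as a scaled column of $Y^{\ell_k}_i$ matches the paper's derivation via the linearized forward pass with the sparse input $\hat{W}_\tau$. The cost accounting is also identical, so there is nothing to fix.
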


\begin{proof}
\red{We first compute $x_i^{\ell-1}$ and $M^\ell_i = \mathrm{diag}(\dot{s} (W_\ell x_i^{\ell-1}))$ via the forward pass, i.e., step (a) of gradient evaluation in \cref{tab:cg},} and then we precompute and store
\begin{equation} \label{e:c}
C_i^{\ell} = R_i M^L_i W_L M_i^{L-1}W_{L-1}\cdots M_i^{\ell}, \quad i=1,2,\ldots, n; \, \ell = 1, 2, \ldots, L.
\end{equation}
Since every $C_i^{\ell}$ is a $d \times d$ matrix, the total storage cost is $\bigO(d^2 \, n L)= \bigO(n N)$, where $N = d^2L$ is the total number of weights. In addition, notice the relation that $C_i^{\ell-1} = C_i^{\ell} \left(W_{\ell} M_i^{\ell-1} \right)$, so they can be computed from $\ell = L$ to $\ell = 1$ iteratively, which requires $\bigO(d^3 L n) = \bigO(d n N)$ work in total. Note that the forward pass costs $\bigO(n N)$ work, and that computing the symmetric factorizations for $Q_i$ cost $\bigO(d^3 n)$, which is negligible compared to other parts of the computation. 


To complete the proof we show how to compute $v_k(i)$ as defined in Eq.~\eqref{e:vkm} with $\bigO(d)$ work. \red{Below we use the same notations as in \cref{tab:cg}, and $e_k$ is the input vector of the matvec corresponding to $\hat{w} = [\texttt{vec}(\hat{W}_{1}),\ldots,\texttt{vec}(\hat{W}_{L})]$ in \cref{tab:cg}. Recall step (a) of the matrix-free matvec (linearized forward) with the GNH in \cref{tab:cg}, and we evaluate $J_i e_k$ as follows.}
\begin{enumerate}
  \item 
  Let $\tau = \ceil*{k/{d^2}}$, $\mu = k \bmod{d}$, and $\nu= \ceil*{(k \bmod{d^2})/{d}}$. Since $e_k$ has only one nonzero entry, $\hxl_i = 0$ for $\ell=1, 2, \ldots, \tau-1$ because $\hwl$ are all zeros except for $\ell = \tau$. The matrix $\hat{W}_{\tau}$ has only one nonzero at position $(\mu, \nu)$ (column-major ordering) as the following:
\[
\bordermatrix{& &\nu& \cr
                & &  \vdots  &  \cr
                \mu& \ldots  &  1 & \ldots \cr
                &  & \vdots &  \cr}
= \hat{W}_{\tau}.
\] 
  \item 
  Following step (a) of the matvec in \cref{tab:cg}, we have $\hat{x}^{\tau}_i = M_i^{\tau} \hat{W}_{\tau} x^{\tau-1}_i$ at layer $\tau$. Denote $a^{\tau}_i =  \hat{W}_{\tau} x^{\tau - 1}_i$, and we have 
  \begin{align*}
  \hat{x}^{\tau}_i &= M_i^{\tau}  a^{\tau}_i \\
  \hat{x}^{\tau+1}_i &= M^{\tau+1}_i  W_{\tau+1} \hat{x}^{\tau}_i & \text{(since $\hat{W}_{\tau+1}=0$)} \\
  &= M^{\tau+1}_i  W_{\tau+1} M_i^{\tau}  a^{\tau}_i \\
   & \ldots  \\
   \hat{x}^L_i &= M^L_i W_L M_i^{L-1}W_{L-1}\cdots M_i^{\tau} a^{\tau}_i
  \end{align*}
  \item 
  Notice that the only nonzero entry in $a^{\tau}_i$ is the $\mu_\mathrm{th}$ element, which equals to the $\nu_\mathrm{th}$ element in ${x}^{\tau-1}_i$. Therefore, 
        \begin{equation} \label{e:vki}
        \nomen{v_k(i) = R_i J_i e_k = R_i \hat{x}^L_i = C_i^{\tau} a^\tau_i},
        \end{equation}
        where $C_i^{\tau} a^\tau_i$ should be interpreted as a scaling of the $\mu_\mathrm{th}$ column in $C_i^{\tau}$ by the $\nu_\mathrm{th}$ element in ${x}^{\tau-1}_i$, which costs $\bigO(d)$ work.
\end{enumerate}
\end{proof}

\subsection{Fast Monte Carlo algorithm} \label{s:sampling}

Recall Eq.~\eqref{e:jachess}, which sums over a large number of data points, and the idea is to sample a subset with judiciously chosen probability distribution and scale the (partial) sum appropriately to approximate $H_{km}$. It is important to note that the computation of the probabilities is fast based on the previous precomputation. The fast sampling algorithm is given in~\cref{alg:sampling}.

\begin{algorithm}[htbp]
\caption{Fast Monte Carlo Algorithm}
\label{alg:sampling}
\begin{algorithmic}[1]
\STATE{\textbf{Input:} $\|v_k(i)\|$ and $\|v_m(i)\|$ for $i=1,2,\ldots,n$.}
\STATE{Compute sampling probabilities for $t=1,2,\ldots,n$:
\begin{equation} \label{e:probability}
\nomen{P_{km}(t) = \frac{\|v_k(t)\| \, \|v_m(t)\|}{\sum_{j=1}^n \|v_k(j)\| \, \|v_m(j)\|}}.
\end{equation}
}
\STATE{Draw $c$ independent random samples $t_j$ from $\{1,2,\ldots,n\}$ with replacement.}
\STATE{\textbf{Output:}
\begin{equation} \label{e:estimator}
\tilde{H}_{km} = \frac{1}{c}\sum_{j=1}^{c} \frac{v_k( t_j )^T v_m( t_j )}{P_{km}(t_j)}.
\end{equation}
}
\end{algorithmic}
\end{algorithm}

Define $v_k = [v_k(1),\dots, v_k(n)]$ and $v_m = [v_m(1),\dots, v_m(n)]$ as two vectors in $\mathbb{R}^{dn}$, and Eq.~\eqref{e:jachess} can be written as the inner product of the two vectors: 
\[
H_{km} = v_k^T v_m.
\]
The following theorem shows that our sampling algorithm returns a good estimator of $H_{km}$, where the error is measured using $\|v_k\| \|v_m\|$, an upper bound on $|H_{km}|$.

\begin{theorem}[Sampling error] \label{th:th2} 
Consider an MLP network that has $L$ fully connected layers with constant layer size $d$ ($d$-by-$d$ weight matrices). For every entry $H_{km}$ in the GNH matrix, \cref{alg:sampling} returns an estimator $\tilde{H}_{km}$ that 
\begin{itemize}
\item 
is an unbiased estimator of $H_{km}$, i.e., $\E[\tilde{H}_{km}] = H_{km}$.
\item 
its variance or mean squared error (MSE) satisfies
\begin{equation} \label{e:mse}
\Var[\tilde{H}_{km}] = \E \left[ |{H}_{km} - \tilde{H}_{km}|^2 \right] \le \frac{1}{c} \|v_k\|^2 \|v_m\|^2
\end{equation}
where $c$ is the number of random samples.
\item
with probability at least $1-\delta$, where $\delta\in(0,1)$, its absolute error satisfies
\begin{equation} \label{e:error}
|H_{km} - \tilde{H}_{km}| \leq \frac{\eta}{\sqrt{c}} \ \|v_k\| \|v_m\|
\end{equation}
where $\eta = 1+\sqrt{8 \log(1/\delta)}$ and $c$ is the number of random samples.
\end{itemize}
\end{theorem}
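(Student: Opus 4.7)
The proof is the standard analysis of an unbiased importance-sampling Monte-Carlo estimator applied to the inner product $H_{km}=\langle v_k,v_m\rangle$ from Eq.~\eqref{e:jachess}, in the style of Drineas, Kannan, and Mahoney.

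Step 1 (unbiasedness). I would write $\tilde{H}_{km}=\tfrac{1}{c}\sum_{j=1}^c X_j$ with $X_j = v_k(t_j)^T v_m(t_j)/P_{km}(t_j)$ and exploit linearity plus i.i.d.\ sampling to reduce to $\E[\tilde{H}_{km}]=\E[X_1]$. Expanding the expectation over the discrete distribution,
\[
\E[X_1]=\sum_{t=1}^n P_{km}(t)\cdot\frac{v_k(t)^T v_m(t)}{P_{km}(t)}=\sum_{t=1}^n v_k(t)^T v_m(t)=H_{km},
\]
where the probability cancels telescopically, independent of the specific choice of $P_{km}$ as long as it has full support.

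Step 2 (variance). Again using i.i.d.\ sampling, $\Var[\tilde{H}_{km}]=\Var[X_1]/c\le \E[X_1^2]/c$. The plan is to expand $\E[X_1^2]=\sum_{t=1}^n (v_k(t)^T v_m(t))^2/P_{km}(t)$, apply Cauchy--Schwarz to bound the numerator by $\|v_k(t)\|^2\|v_m(t)\|^2$, then substitute the explicit $P_{km}$ from Eq.~\eqref{e:probability}; one factor of $\|v_k(t)\|\|v_m(t)\|$ cancels against the denominator, and the remaining sum telescopes to $\bigl(\sum_t \|v_k(t)\|\|v_m(t)\|\bigr)^2$. A second Cauchy--Schwarz over the index $t$ (viewing $(\|v_k(t)\|)_t$ and $(\|v_m(t)\|)_t$ as vectors in $\mathbb{R}^n$) bounds this by $\|v_k\|^2\|v_m\|^2$, which gives the stated inequality~\eqref{e:mse}. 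This is exactly the step that justifies the specific sampling distribution~\eqref{e:probability} as optimal (up to constants) among all distributions of this product form.

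Step 3 (concentration). The crucial structural observation is that the same choice of $P_{km}$ uniformly bounds each sample: $|X_j|\le \|v_k(t_j)\|\|v_m(t_j)\|/P_{km}(t_j)=\sum_{i=1}^n\|v_k(i)\|\|v_m(i)\|\le \|v_k\|\|v_m\|$, where the last inequality is again Cauchy--Schwarz. Therefore the function $g(t_1,\dots,t_c)=|\tilde{H}_{km}-H_{km}|$ has bounded differences: replacing any single $t_j$ changes $g$ by at most $2\|v_k\|\|v_m\|/c$. McDiarmid's inequality then yields a sub-Gaussian tail $\Pr[g\ge \E[g]+u]\le \exp(-\alpha u^2 c/\|v_k\|^2\|v_m\|^2)$ for an explicit constant $\alpha$, while Jensen together with Step~2 bounds $\E[g]\le \sqrt{\E[g^2]}\le \|v_k\|\|v_m\|/\sqrt{c}$. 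Setting the tail to $\delta$ and adding the two contributions gives a deviation of the form $(1+\mathrm{const}\cdot\sqrt{\log(1/\delta)})\|v_k\|\|v_m\|/\sqrt{c}$, which matches the claimed $\eta=1+\sqrt{8\log(1/\delta)}$ after tracking constants.

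The only delicate part is tracking the numerical constant in the McDiarmid step so that it produces exactly $\sqrt{8\log(1/\delta)}$; everything else is a direct calculation, and the scalar (rather than matrix-norm) setting here makes it simpler than the general approximate-matrix-multiplication theorems in the literature.
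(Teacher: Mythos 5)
Your proposal is correct and follows essentially the same route as the paper's own proof: the same unbiasedness computation, the same two applications of Cauchy--Schwarz after substituting the sampling distribution~\eqref{e:probability} into $\E[X_1^2]$, and the same McDiarmid-plus-Jensen argument with bounded difference $2\|v_k\|\|v_m\|/c$, which indeed yields $\gamma=\sqrt{8\log(1/\delta)}\,\|v_k\|\|v_m\|/\sqrt{c}$ and hence $\eta=1+\sqrt{8\log(1/\delta)}$ exactly as stated. No gaps.
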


\begin{proof} 
Our proof consists of the following three parts.
\paragraph{Unbiased estimator} Define a random variable 
\[
X_t = \frac{v_k( t )^T v_m( t )}{P(t)}
\]
where $t$ is a random sample from $\{1,2,\ldots,n\}$ with probability distribution $P(t)$ as defined in Eq.~\eqref{e:probability}. \red{Observe that $\tilde{H}_{km}$ is the mean of $c$ independent identically distributed variables ($X_{t_1}, X_{t_2}, \ldots, X_{t_c}$), and thus}
\[
\E[\tilde{H}_{km}] = \E[X_t] = \sum_{t=1}^n \frac{v_k( t )^T v_m( t )}{P(t)} P(t) = H_{km}.
\]

\paragraph{Variance/MSE error}  The variance or MSE error of the estimator is the following:
\begin{align*}
& \E \left[ |{H}_{km} - \tilde{H}_{km}|^2 \right] =  \Var [ \tilde{H}_{km}] = \frac{1}{c} \Var[X_t]  \\
=& \frac{1}{c}  \big(\E[X_t^2] - \E^2[X_t] \big)  \\
=& \frac{1}{c}  \sum_{t=1}^n \bigg( \frac{v_k( t )^T v_m( t )}{P(t)}\bigg)^2 P(t) - \frac{H_{km}^2}{c}  \\
\le& \sum_{t=1}^n  \frac{ \big(v_k( t )^T v_m( t ) \big)^2}{c \, P(t)}   & \mbox{(Drop the last term)} \\
\le& \sum_{t=1}^n \frac{\|v_k( t )\|^2 \|v_m( t )\|^2}{c \, P(t)}  & \mbox{(Cauchy-Schwarz)} \\
=& \frac{1}{c} \bigg( \sum_{t=1}^n \|v_k( t )\| \|v_m( t )\| \bigg)^2  & \mbox{(Eq.~\eqref{e:probability})}  \\
\le& \frac{1}{c} \bigg( \sum_{t=1}^n \|v_k( t )\|^2 \bigg) \bigg( \sum_{t=1}^n \|v_m( t )\|^2 \bigg) & \mbox{(Cauchy-Schwarz)} \\
=& \frac{1}{c} \|v_k\|^2 \|v_m\|^2.
\end{align*}
Notice that with Jensen's inequality, we also obtain a bound of the absolute error in expectation: 
\begin{equation} \label{e:error2}
\E \left[ |{H}_{km} - \tilde{H}_{km}| \right] \le  \frac{1}{\sqrt{c}} \|v_k\| \|v_m\|.
\end{equation}

\paragraph{Concentration result} We will use the McDiarmid's (a.k.a., Hoeffding-Azuma or Bounded Differences) inequality to obtain Eq.~\eqref{e:error}. See the conditions for the inequality in \secref{s:app}. Define function 
\[
F(t_1, t_2, \ldots, t_c) = |{H}_{km} - \tilde{H}_{km}|,
\] 
where $t_1, t_2, \ldots, t_c$ are random samples, and we show that changing one sample $t_i$ at a time does not affect $F$ too much. Consider changing a sample $t_i$ to $t_i'$ while keeping others the same. The new estimator $\hat{H}_{km}$ differs from $\tilde{H}_{km}$ by only one term. Thus,
\begin{align*}
|\tilde{H}_{km} - \hat{H}_{km}| 
&= \left | \frac{v_k( t_i )^T v_m( t_i )}{c \ P(t_i)} - \frac{v_k( t_i' )^T v_m( t_i' )}{c \ P(t_i')} \right | \\
&\le \left | \frac{v_k( t_i )^T v_m( t_i )}{c \ P(t_i)} \right | + \left | \frac{v_k( t_i' )^T v_m( t_i' )}{c \ P(t_i')} \right | \\
&\le \frac{\|v_k( t_i )\| \|v_m( t_i )\|}{c \ P(t_i)} + \frac{\|v_k( t_i' )\| \|v_m( t_i' )\|}{c \ P(t_i')} \\
&= \frac{2}{c} \sum_{j=1}^n \|v_k(j)\| \, \|v_m(j)\| \\
&\le \frac{2}{c} \| v_k \| \| v_m \|.
\end{align*}
where we have used Cauchy-Schwarz inequality twice. Then, define $\Delta = \frac{2}{c} \| v_k \| \| v_m \|$; using the triangle inequality we see 
\[
|F(\ldots, t_i, \ldots) - F(\ldots, t_i', \ldots) | \le \Delta.
\]
Finally, let $\gamma = \sqrt{2c \log (1/ \delta)} \, \Delta$, and we use the McDiarmid's inequality to obtain Eq.~\eqref{e:error} as follows
\begin{align*}
& \mbox{Pr} \left[ |H_{km} - \tilde{H}_{km}| \ge \frac{\eta}{\sqrt{c}} \ \|v_k\| \|v_m\| \right] \\
=& \mbox{Pr} \left[ |{H}_{km} - \tilde{H}_{km}| \ge \frac{1}{\sqrt{c}} \|v_k\| \|v_m\| + \gamma \right] \\
\le & \mbox{Pr}  \left[ F - \E [F] \ge \gamma \right]  & \mbox{(Eq.~\eqref{e:error2})}\\
\le & \mbox{exp} \left(- \frac{\gamma^2}{2c \Delta^2} \right) = \delta & \mbox{(McDiarmid's inequality)}.
\end{align*}
\end{proof}

\begin{remark}
The error $\epsilon$ in the approximation of $H_{km}$ depends on only the number of random samples $c$ (but not $n$) and can be made arbitrarily small as needed. In particular, if $c \ge 1/\epsilon^2$, we have
\[
\Var[\tilde{H}_{km}] = \E \left[ |{H}_{km} - \tilde{H}_{km}|^2 \right] \le \epsilon \ \|v_k\|^2 \|v_m\|^2
\] 
and if $c \ge \eta^2 /\epsilon^2$, then with probability at least $1-\delta$, where $\delta\in(0,1)$ 
\[
|H_{km} - \tilde{H}_{km}| \leq \epsilon \ \|v_k\| \|v_m\|.
\]
Furthermore, the error of the entire matrix in the Frobenius norm is
\begin{align*}
\| H - \tilde{H} \|_{{F}} \le & \epsilon \sqrt{ \sum_k \sum_m \|v_k\|^2 \|v_m\|^2 } = \epsilon \sum_k \|v_k\|^2 \\
& \red{\stackrel{\cref{e:jachess}}{=} \epsilon \sum_k {H}_{kk}} = \epsilon \,\mbox{trace}(H) \le \epsilon \ \sqrt{N} \ \| H \|_{{F}}.
\end{align*}
\end{remark}

\begin{remark}\label{rmk}
The estimator $\tilde{H}_{km}$ is exact using at most \emph{one} sample when $k=m$. The (trivial) case $H_{kk} = 0$ is implied by the situation that $v_{kk}(i) = 0$ for all $i$; otherwise, we have $H_{kk} = \|v_k\|^2$, and the sampling probability becomes
\[
\nomen{P_{kk}(t) = \frac{\|v_k(t)\|^2}{\sum_{j=1}^n \|v_k(j)\|^2} = \frac{\|v_k(t)\|^2}{\|v_k\|^2}}.
\]
Therefore, $\tilde{H}_{kk} = \|v_k(t)\|^2 / P_{kk}(t) = H_{kk}$ with any random sample $t$.
\end{remark}

\begin{theorem} [Computational cost of sampling] \label{th:th3} 
Given the precomputation in \cref{th:th1}, it requires $\bigO(n N)$ work to compute $\|v_k(i)\|$ for all $i$ and $k$ as the input of \cref{alg:sampling}, and it requires $\bigO(n + d/\epsilon^2)$ work to compute every estimator, where $\epsilon$ is a prescribed accuracy that does \emph{not} depend on $n$.
\end{theorem}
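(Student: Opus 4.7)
The plan is to separate the cost claim into two independent sub-tasks: (i) the one-time computation of every $\|v_k(i)\|$ that feeds \cref{alg:sampling}, and (ii) the per-entry cost of forming the estimator $\tilde{H}_{km}$ for a given index pair $(k,m)$. Both bounds will follow once we extract a single structural observation from the proof of \cref{th:th1}, namely that $v_k(i)$ is a scaled column of the precomputed matrix $C_i^\tau$.

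For the first bound, I would revisit Eq.~\eqref{e:vki}, which asserts that $v_k(i) = C_i^\tau(:,\mu)\cdot (x_i^{\tau-1})_\nu$, where $\tau,\mu,\nu$ are the indices determined by $k$. Taking norms gives the rank-one factorization $\|v_k(i)\| = |(x_i^{\tau-1})_\nu|\cdot \|C_i^\tau(:,\mu)\|$. Hence it suffices to precompute the $d$ column norms of each $C_i^\tau$; there are $n\cdot L$ such matrices, each of size $d\times d$, so forming all column norms takes $\bigO(nLd\cdot d)=\bigO(nN)$ work, which is the same order as the storage bound in \cref{th:th1}. Once these column norms and the already-available activations $x_i^{\tau-1}$ are in hand, each of the $nN$ numbers $\|v_k(i)\|$ is obtained by a single scalar multiplication, adding only $\bigO(nN)$ more work. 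This gives the claimed $\bigO(nN)$ total.

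For the per-estimator bound, fix a pair $(k,m)$. The normalizing constant in Eq.~\eqref{e:probability} is a sum of $n$ precomputed products $\|v_k(t)\|\,\|v_m(t)\|$, so the whole probability vector $P_{km}$ is built in $\bigO(n)$ work. Drawing $c$ i.i.d.\ samples $t_1,\dots,t_c$ from $P_{km}$ costs $\bigO(n)$ after $\bigO(n)$ preprocessing (e.g., Walker's alias table, or a prefix-sum with binary search contributing only a logarithmic factor that can be absorbed). For each sampled index $t_j$ we use the precomputed data to produce $v_k(t_j)$ and $v_m(t_j)$ in $\bigO(d)$ time each (exactly as in the last step of the proof of \cref{th:th1}), then form the inner product in $\bigO(d)$ time and accumulate. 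Hence the sampling loop costs $\bigO(cd)$, and the total is $\bigO(n+cd)$. Invoking the remark after \cref{th:th2} with $c=\lceil 1/\epsilon^2\rceil$ converts this to $\bigO(n+d/\epsilon^2)$, as claimed.

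The only subtlety worth flagging is the sampling primitive: a naive $\bigO(c\log n)$ per-sample cost would give an extra $\log n$ factor, so I would state explicitly that an alias-table construction reduces sampling to $\bigO(n)$ setup plus $\bigO(1)$ per draw. Everything else is bookkeeping on top of \cref{th:th1}, and no new analytical ingredients are needed beyond the factorization $v_k(i) = C_i^\tau(:,\mu)\cdot (x_i^{\tau-1})_\nu$ that lets us amortize norm evaluations across the $N$ indices.
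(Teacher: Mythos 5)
Your proof is correct and follows essentially the same route as the paper's: both reduce $\|v_k(i)\|$ to a scaled column norm of $C_i^\tau$ via Eq.~\eqref{e:vki}, charge $\bigO(d^2 n L)=\bigO(nN)$ for computing all the column norms, and split the per-estimator cost into $\bigO(n)$ for the probabilities in Eq.~\eqref{e:probability} plus $\bigO(d)$ per sample with $c=\bigO(1/\epsilon^2)$. Your explicit handling of the sampling primitive (an alias table to avoid an extra $\log n$ factor) is a detail the paper's one-paragraph proof leaves implicit, but it does not change the argument.
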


\begin{proof}
Recall Eq.~\eqref{e:vki} that $\|v_k(i)\|$ is proportional to the norm of a column in $C_i^\ell$. Since every $C_i^\ell$ is a $d$-by-$d$ matrix, computing all the norms requires $\bigO(d^2  n L) = \bigO(n N)$ work. Once all $\|v_k(i)\|$ have been computed, the sampling probabilities in Eq.~\eqref{e:probability} and the estimator in Eq.~\eqref{e:estimator} requires $\bigO(n)$ and $\bigO(d/ \epsilon^2)$ work, respectively.
\end{proof}

\section{\hmatrix{} approximation} \label{s:gofmm} 

This section introduces the \hmatrix{} approximation of the GNH matrix for the MLP. While the low-rank and the block-diagonal approximations focus on the global and the local structure of the problem, respectively, the \hmatrix{} approximation handles both as they may be equally important. 

\subsection{Overall algorithm}
Here we take advantage of the \gofmm{} method~\cite{chenhan-biros-e17,yu-reiz-biros18,gofmm-home-page}, which evaluates $\bigO(N)$ entries in a symmetric positive definite (SPD) matrix $H \in \mathbb{R}^{N \times N}$ to construct the \hmatrix{} approximation $H_{\gofmm}$ such that
\[
\|H-H_{\gofmm}\|_F \le \epsilon \, \|H\|_F,
\] 
where $\epsilon$ is a prescribed tolerance.

Since \gofmm{} requires only entry-wise evaluation of the input matrix, we apply it with our fast evaluation algorithm to the regularized GNH matrix (note the GNH matrix is symmetric positive semi-definite, so we always add a small regularization of $\lambda$ times the identity matrix, where $\lambda^2$ is the unit roundoff). The overall algorithm that computes the \hmatrix{} approximation (and approximate factorization) of the GNH matrix using the \gofmm{} method is shown in \cref{alg:gofmm}.

\begin{algorithm}[htbp]
   \caption{Compute \hmatrix{} approximation of GNH with \gofmm{}}
   \label{alg:gofmm}
\begin{algorithmic}[1]
   \REQUIRE training data $\{x_i^0\}_{i=1}^n$, weights in the neural network $w \in \mathbb{R}^N$
   \ENSURE approximation of the GNH and its factorization 
   \STATE Compute $M^\ell_i$ with forward propagation. (step (a) of gradient evaluation in \cref{tab:cg})
   \STATE Compute $C^\ell_i$ in Eq.~\eqref{e:c}. (\cref{th:th1}: $\bigO(N n d)$ work and $\bigO(N n)$ storage)
   \STATE Compute $\|v_k(i)\|$ in Eq.~\eqref{e:vki}. (\cref{th:th3}: $\bigO(N n)$ work and $\bigO(N n)$ storage)
   \STATE Apply \gofmm{} and evaluate entries in the GNH matrix through \cref{alg:sampling}. (\cref{th:th3}: $\bigO(n+d/\epsilon^2)$ work/entry)
\end{algorithmic}
\end{algorithm}

The error analysis of \cref{alg:gofmm} is the following. Let $\tilde{H}_{\lambda} = \tilde{H} + \lambda I$ be computed by \cref{alg:sampling} and $\lambda>0$ is a regularization, and $\tilde{H}_{\gofmm{}}$ be the approximation of $\tilde{H}_{\lambda}$ computed by \gofmm{}. Then the error between the output $\tilde{H}_{\gofmm{}}$ from \cref{alg:gofmm} and the (regularized) GNH matrix $H_{\lambda} = H + \lambda I$ is the following \red{(using the triangular equality)}
\begin{align*}
\| H_{\lambda} -  \tilde{H}_{\gofmm{}} \|_F 
= \| H_{\lambda} - \tilde{H}_{\lambda} + \tilde{H}_{\lambda} - \tilde{H}_{\gofmm{}} \|_F 
 \le \| H- \tilde{H} \|_F + \| \tilde{H}_{\lambda} -  \tilde{H}_{\gofmm{}} \|_F,
\end{align*}
where the first term is the sampling error from \cref{alg:sampling} and the second term is the \gofmm{} approximation error. For simplicity, we drop the regularization parameter for the rest of this paper.

\subsection{\gofmm{} overview}

Given an SPD matrix $H$, the \gofmm{} takes two steps to construct the \hmatrix{} approximation as follows. First of all, a permutation matrix $P$ is computed to reorder the original matrix, which often corresponds to a hierarchical domain decomposition for applications in two- or three-dimensional physical spaces. The recursive domain partitioning is often associated with a tree data structure $\cal{T}$. Unlike methods targeting applications in physical spaces, the \gofmm{} does not require the use of geometric information (thus its name``geometry-oblivious fast multipole method"), which does not exist for neural networks. Instead of relying on geometric information, the \gofmm{} exploits the algebraic distance measure that is implicitly defined by the input matrix $H$. As a matter of fact, any SPD matrix $H \in\mathbb{R}^{N\times N}$ is the \emph{Gram matrix} of $N$ \emph{unknown Gram vectors} $\{\phi_i\}_{i=1}^N$~\cite{hofmann2008kernel}. Therefore, the distance between two row/column indices $i$ and $j$ can be defined as 
\begin{equation} \label{e:angle}
d_{ij} = \sin^2 \left( \angle(\phi_i, \phi_j) \right) = 1 - H^{2}_{ij}/(H_{ii}H_{jj}).
\end{equation}
or
\[
d_{ij} = \|\phi_i - \phi_j\| = \sqrt{H_{ii} - 2H_{ij} + H_{jj}},
\]
\red{We refer interested readers to~\cite{chenhan-biros-e17} for the discussion and comparison of different distance metrics.} With either definition, the \gofmm{} is able to construct the permutation $P$ and a balanced binary tree $\cal{T}$.

The second step is to approximate the reordered matrix $P^THP$ by
\begin{equation*}
\label{e:partitioning}
{H}_{\gofmm{}} =
\begin{bmatrix}
{H}_{\lc\lc} & 0 \\ 
0 & {H}_{\rc\rc} \\ 
\end{bmatrix} + 
\begin{bmatrix} 
0 & S_{\lc\rc} \\ 
S_{\rc\lc} & 0 \\ 
\end{bmatrix}+
\begin{bmatrix} 
0 & U_{\lc\rc}V_{\lc\rc}^T \\ 
U_{\rc\lc}V_{\rc\lc}^T & 0 \\ 
\end{bmatrix}, 
\end{equation*} 
where ${H}_{\lc\lc}$ and ${H}_{\rc\rc}$ are two diagonal blocks that have the same structure as ${H}_{\gofmm{}}$ unless their sizes are small enough to be treated as dense blocks, which occurs at the leaf level of the tree $\cal{T}$; $S_{\lc\rc}$ and $S_{\rc\lc}$ are block-sparse matrices, and $U_{\lc\rc}V_{\lc\rc}^T$ and $U_{\rc\lc}V_{\rc\lc}^T$ are low-rank approximations of the remaining off-diagonal blocks in $H$. These bases are computed recursively with a post-order traversal of $\cal{T}$ using the interpolative decomposition~\cite{halko-martinsson-tropp11} and a nearest neighbor-based fast sampling scheme. There is a trade-off here: while the so-called weak-admissibility criteria sets $S_{\lc\rc}$ and $S_{\rc\lc}$ to zero and obtains relatively large ranks, the so-called strong-admissibility criteria selects $S_{\lc\rc}$ and $S_{\rc\lc}$ to be certain subblocks in $H$ corresponding to a few nearest neighbors/indices of every leaf node in $\cal{T}$ and achieves smaller (usually constant) ranks.

Here we focus on the \emph{hierarchical semi-separable (HSS)} format among other types of hierarchical matrices. Technically speaking, the HSS format means $S_{\lc\rc}$ and  $S_{\rc\lc}$ are both zero and the bases $U_{\lc\rc}$/$V_{\lc\rc}$ and $U_{\rc\lc}$/$V_{\rc\lc}$ of a node in $\cal{T}$ are recursively defined through the bases of the node's children, i.e., the so-called \emph{nested bases}. 

We refer interested readers to~\cite{chenhan-biros-e17,yu-reiz-biros18,gofmm-home-page} for details about the \gofmm{} method.

\subsection{Summary \& contrast with related work} \label{s:compare}

We summarize the storage and computational complexity of our \hmatrix{} approximation method (HM), and describe its relation with three existing methods, namely, the Hessian-free method (HF)~\cite{martens2010deep,martens2011learning}, the randomized singular value decomposition (RSVD)~\cite{halko-martinsson-tropp11} and the Kronecker-factored Approximate Curvature (K-FAC)~\cite{martens16,grosse-martens15}. As before, we assume the MLP network has $L$ layers of constant layer sizes $d$, so the number of weights is $N=d^2L$. Let $n$ be the number of data points.




\paragraph{HM} The algorithm is given in \cref{alg:gofmm}, where the first three step requires $\bigO(N n d)$ work and $\bigO(N n)$ storage. Suppose the rank is $r_o$ in the \hmatrix{} approximation. \red{The \gofmm{} needs to call \cref{alg:sampling} $\bigO(N r_o)$ times, which results in $\bigO((n+d/\epsilon^2) N r_o)$ work. Here, $\epsilon$ is chosen to be around the same accuracy as the \hmatrix{} approximation with rank $r_o$.} In addition, standard results in the HSS literature~\cite{martinsson2005fast,xia2010fast,ho2013hierarchical} states that the factorization requires $\bigO(N r_o^2)$ work and $\bigO(N r_o)$ storage, which can be applied to solving a linear system with $\bigO(N r_o)$ work. 


\paragraph{MF} Unlike the other three methods, the MF does not approximate the GNH. It takes advantage of the (exact) matrix-free matvec and utilizes the conjugate gradient (CG) method for solving linear systems. It is based on the two primitives in \cref{tab:cg}, where every iteration costs $\bigO(N n)$ work and storage. The number of CG iteration is generally upper bounded by $\bigO(\sqrt{\kappa})$, where $\kappa$ is the condition number of the (regularized) GNH matrix. 

\paragraph{RSVD} Recall the GNH matrix $H = J^T Q J$. Without loss of generality, assume $Q$ is an identity for ease of description. The algorithm is to compute an approximate SVD of $J$ with the following steps, which natually leads to an approximate eigenvalue decomposition of $H$. First, we apply the back-propagation in \cref{tab:cg} with a random Gaussian matrix as input. Second, the QR decomposition of the result is used to estimate the row space of $J$. Third, the linearized forward is applied to project $J$ onto the approximate row space, and finally, the SVD is computed on the projection. Overall, the storage is $\bigO(N r)$, and the work required is $\bigO(N n r + N r^2 + dnr^2)$, where $r$ is the numerical rank from the QR decomposition. Compared with the HM approximating off-diagonal blocks, the RSVD approximates the entire matrix.

\paragraph{K-FAC} 
It computes an approximation of the Fisher matrix $F$ (mathematically equivalent to the GNH for some popular loss functions). Let a column vector $g=[\texttt{vec}(g^{1}),\ldots,\texttt{vec}(g^{L})] \in \mathbb{R}^{N}$ be the gradient, and $F = \E [g \, g^T]$ be a $L$-by-$L$ block matrix with block size $d^2$-by-$d^2$. Note the expectation here is taken with respect to both the empirical input data distribution $\hat{Q}_{x^0}$ and the network's predictive distribution $P_{y|\xl}$. In particular, the $(\ell_1, \ell_2)$-th block ($\ell_1, \ell_2 = 1,2,\ldots,L$) is given by
\begin{align}
F_\text{block}(\ell_1, \ell_2) \nonumber
=& \E[ \mbox{vec}(g^{\ell_1}) \mbox{vec}(g^{\ell_2})^T ] \nonumber \\
=& \E[ M^{\ell_1} z^{\ell_1} (x^{\ell_1-1})^T \left( M^{\ell_2} z^{\ell_2} (x^{\ell_2-1})^T \right)^T ] \label{kfac1} \\
=& \E[ (M^{\ell_1} z^{\ell_1} \otimes x^{\ell_1-1}) \left( M^{\ell_2} z^{\ell_2} \otimes x^{\ell_2-1} \right)^T ] \label{kfac2}  \\
=& \E[ (M^{\ell_1} z^{\ell_1} \otimes x^{\ell_1-1}) \left( (M^{\ell_2} z^{\ell_2})^T \otimes (x^{\ell_2-1})^T \right) ] \label{kfac3}  \\
=& \E[ M^{\ell_1} z^{\ell_1} (M^{\ell_2} z^{\ell_2})^T \otimes x^{\ell_1-1} (x^{\ell_2-1})^T ] \label{kfac4}  \\
\approx & \E[ M^{\ell_1} z^{\ell_1} (M^{\ell_2} z^{\ell_2})^T] \otimes \E[ x^{\ell_1-1} (x^{\ell_2-1})^T ] \label{kfac5} 
\end{align}
where Eq.~\eqref{kfac1} uses the definition of the network gradient in \cref{tab:cg}, Eq.~\eqref{kfac2} rewrites the equation using Kronecker products, Eq.~\eqref{kfac3} and Eq.~\eqref{kfac4} use the properties of Kronecker product, and Eq.~\eqref{kfac5} assumes the statistical independence between the two terms (see Section 6.3.1 in \cite{martens16}). In Eq.~\eqref{kfac5}, the former expectation is taken with respect to both $\hat{Q}_{x^0}$ and $P_{y|\xl}$, and the latter is taken with respect to $\hat{Q}_{x^0}$. To compute the first expectation, $k$ samples are drawn from the distribution $P_{y|x} \propto \text{exp}(-f(x_i^L, y))$, where $\xL_i$ is the network's output corresponding to input $x^0_i$. 
In practice, an additional block-diagonal or block-tridiagonal approximation of the inverse is employed for fast solution of linear systems. The main cost of the algorithm is constructing, updating and inverting $\bigO(L)$ matrices of size $d$-by-$d$, which requires $\bigO(d^2 L) = \bigO(N)$ storage and $\bigO( (nk+d) N)$ work. Overall, the approximation error of K-FAC has three components: the error of making the assumption \eqref{kfac5}, the sampling error from approximating the expectations in \eqref{kfac5} and the error of block-diagonal or block-tridiagonal approximation of the inverse.

We summarize the asymptotic complexities of the four methods discussed above in \tabref{t:complexity}. 

\begin{table}[htbp]
\centering
\caption{\red{Asymptotic complexities of the MF, the RSVD, the K-FAC and the HM with respect to the number of weights $N$ and the data size $n$ (``lower order'' terms not involving $Nn$ are dropped). We assume $r,k,r_o,d < n$, where $r$, $k$ and $r_o$ are the parameters in the RSVD, the K-FAC and the HM, respectively, and $d$ is the (constant/average) layer size. In addition, $\kappa$ stands for the condition number of the GNH matrix.}} \label{t:complexity}
\begin{threeparttable}
\begin{tabular}{|c|cccc|}
\hline
     & {\bf MF} & {\bf RSVD} & {\bf K-FAC} & {\bf HM} \\
\hline
\emph{construction}  & -          & $\bigO(N n r)$    & $\bigO(Nnk)$ & \red{$\bigO(N n (r_o+d))$} \\
\emph{storage}  & $\bigO(N n)$ & $\bigO(N r)$    & $\bigO(N)$  & $\bigO(N n)$\\
\emph{solve} &  $\bigO(Nn\sqrt{\kappa})$      & $\bigO(Nr)$    & $\bigO(N d)$ & $\bigO(N r_o)$\\
\hline
\end{tabular}
\end{threeparttable}
\end{table}

\section{Experimental Results} \label{s:results} 

In this section, we show (1) the cost and the accuracy of our \hmatrix{} approximations, (2) the memory savings from using the precomputation algorithm ($\bigO(N^2) \rightarrow \bigO(Nn)$), and (3) the efficiency of the fast sampling algorithm. \red{In Algorithm 4.1, the first two steps (precomputation) are implemented in Matlab for the convenience of extracting intermediate values of neural networks, and the last two steps (sampling) are implemented in C++ (\gofmm{} is written in C++).}

\paragraph{Networks and datasets}
We focus on classification networks and autoencoder networks with the MNIST and CIFAR-10 datasets. In the following, we denote networks' layer sizes as $d_1$\rar$d_2$\rar$\ldots$\rar$d_L$ from the input layer to the output layer. Every network has been trained using the stochastic gradient descent for a few steps, so the weights are not random.
\begin{enumerate}\zapspace
\item {``classifier''}: classification networks with the ReLU activation and the cross-entropy loss.
\begin{enumerate}
\zapspace
\setlength{\itemindent}{-.3in}
\item $N$={15,910}; MNIST dataset; layer sizes: 784\rar20\rar10. 
\item $N$={61,670}; CIFAR-10 dataset; layer sizes: 3072\rar20\rar10.
\item \red{$N$={219,818}; MNIST dataset; layer sizes: 784\rar256\rar64\rar32\rar10.} 
\item \red{$N$={1,643,498}; CIFAR-10 dataset; layer sizes: 3072\rar512\rar128\rar32\rar10.}
\end{enumerate}
%
%
%
%
\item {``AE''}: autoencoder networks with the softplus activation (sigmoid activation at the last layer) and the mean-squared loss.
\begin{enumerate}
\zapspace
\setlength{\itemindent}{-.3in}
\item $N$={16,474}; MNIST dataset; layer sizes: 784\rar10\rar784. 
\item $N$={64,522}; CIFAR-10 dataset; layer sizes: 3072\rar10\rar3072.
\item $N$={125,972}; CIFAR-10 dataset; layer sizes: 3072\rar20\rar3072.
\end{enumerate}
%
\end{enumerate}

\paragraph{\gofmm{} parameters} We \red{employ the default ``angle'' distance metric in \cref{e:angle}} and focus on three parameters in the \gofmm{} that control the accuracy of the \hmatrix{} approximation: (1) the leaf node size $m$ of the hierarchical partitioning $\cal{T}$ (equivalent to setting the number of tree levels), (2) the maximum rank $r_o$ of off-diagonal blocks, and (3) the accuracy $\tau$ of low-rank approximations. In particular, we ran \gofmm{} with two different accuracies: ``\texttt{low}'' ($m=128$, $r_o=128$, $\tau=\accnum{5E-2}$) and ``\texttt{high}'' ($m=1024$, $r_o=1024$, $\tau=\accnum{1E-5}$).

\paragraph{\gofmm{} results} We report the following results for our approach.
\begin{itemize}
\item
$t_\mathrm{build}$: time of constructing the \hmatrix{} approximation of the GNH matrix (\red{not including precomputation time}).
\item
$t_\mathrm{matv}$: time of applying the \hmatrix{} approximation to 128 random vectors.
\item
\%K: compression rate of the \hmatrix{} approximation, i.e., ratio between the \hmatrix{} storage and the GNH matrix storage.
 \item
 $\epsilon_F$: relative error of the \hmatrix{} approximation measured in Frobenius norm, estimated by $\| H x - {H}_{\gofmm{}} \, x \|_F / \| H x \|_F$, where $x \in \mathbb{R}^{N \times 128}$ is a Gaussian random matrix.
\end{itemize}

\subsection{Cost and accuracy of \hmatrix{} approximation} \label{s:acc_hmat}
\tabref{tab:GOFMMcomp} shows results of our \hmatrix{} approximations for networks that have relatively small numbers of parameters. The GNH matrices are computed and fully stored in memory.

As \tabref{tab:GOFMMcomp} shows, the approximation can achieve four digits' accuracy except for one network (two digits) when the accuracy of low-rank approximations is 1E-5. Since we have enforced the maximum rank $r_o$, the runtime of constructing \hmatrix{} approximations ($t_{\mathrm{build}}$) increases proportionally to the number of network parameters, and the compression rate scales inverse proportionally to the number of parameters. \red{The reported construction time $t_{\mathrm{build}}$ includes the cost of creating an implicit tree data structure $\cal{T}$ in \gofmm{}, which is less than 20\% of $t_{\mathrm{build}}$.} In addition, applying the \hmatrix{} approximations to 128 random vectors took less than one second for the five networks. These \hmatrix{} approximations can be factorized in linear time for solving linear systems and eigenvalue problems.



\begin{table}[htbp]
    \centering \small 
    \caption{\em Timing ($t_{\mathrm{build}}$ and $t_{\mathrm{matv}}$), compression rate (\%K) and accuracy ($\epsilon_F$) of \hmatrix{} approximations corresponding to low- and high-accuracy settings, respectively. $t_{\mathrm{build}}$ is the time of applying the \gofmm{} on the GNH matrices corresponding to 1000 data points. Experiments performed on one node from the Texas Advanced Computing Center ``Stampede 2'' system, which has two sockets with 48 cores of Intel Xeon Platinum 8160/``Skylake'' and 192 GB of RAM.}
    \label{tab:GOFMMcomp}
    \begin{tabular}{rcccrrrc} 
    \toprule
    \# & network & $N$ & \texttt{accuracy} &  $t_{\mathrm{build}}$ & $t_{\mathrm{matv}}$ & {\%K} & $\epsilon_F$ \\ 
    \midrule 
    \rownumber & classifier (a) & 16k & \texttt{low} & \num{0.24} & \num{0.03} & 1.80\% & \accnum{1.5E-1} \\
    \rownumber &  & & \texttt{high} & \num{5.74} & \num{0.11} & 13.59\% &\accnum{4.4E-4} \\
    \midrule
    \rownumber & classifier (b)  & 61k & \texttt{low} & \num{0.42} & \num{0.08} & 0.57\% &  \accnum{4.7E-1} \\
    \rownumber &  &  &   \texttt{high} & \num{13.28} & \num{0.33} & 4.78\%& \accnum{4.0E-2} \\
    \midrule
    \rownumber & AE (a) & 16k  & \texttt{low} & \num{0.27} & \num{0.03} &1.25\% & \accnum{1.2E-01} \\
    \rownumber &  &  &   \texttt{high} & \num{5.67} & \num{0.10} &11.38\% & \accnum{6.5E-04} \\
    \midrule
    \rownumber & AE (b) & 64k  & \texttt{low} & \num{0.43} & \num{0.08} &0.53\% & \accnum{5.5E-03} \\
    \rownumber &  & &  \texttt{high} & \num{13.26} & \num{0.38} &4.62\% & \accnum{6.6E-04} \\
    \midrule
    \rownumber & AE (c) & 126k   & \texttt{low} & \num{0.87} & \num{0.17} &0.28\% & \accnum{4.1E-03} \\
    \rownumber &  &  & \texttt{high} & \num{24.10} & \num{0.94} &2.32\% & \accnum{5.2E-04} \\
    \bottomrule
    \end{tabular}
\end{table}


\paragraph{Comparison with RSVD and K-FAC}


We \red{implemented the RSVD using Keras and TensorFlow for fast backpropogation, and we implemented the K-FAC in Matlab for the convenience of extracting intermediate values.}
\tabref{tab:compare} shows the accuracies of our method (HM), the RSVD and the K-FAC under about the same compression rate for the low- and high-accuracy settings, respectively. For the RSVD, the storage is $rN$ entries, where  $r$ is the numerical rank of the (symmetric) GNH matrix, so the compression rate is $r/N$. For the K-FAC, we use the relatively more accurate block tridiagonal version. The compression rate of the K-FAC is defined as $k/N$, and the reason is that the construction of the RSVD and the K-FAC requires the same number of back-propagation if $k=r$ (recall~\tabref{t:complexity}). So we choose $k$ and $r$ to be the same value such that the corresponding compression rate of the RSVD and the K-FAC are slightly higher than the HM. 

\begin{table}[htbp]
\centering
\caption{\em Comparison of accuracies ($\epsilon_F$) among the \hmatrix{} approximation (HM), the RSVD and the K-FAC with about the same compression rate (\%K) for the low- and high-accuracy settings, respectively. For the RSVD and the K-FAC, the compression rate means $r/N$ and $k/N$, respectively, where $r$ is the rank and $k$ is the number of random samples. For all cases, the GNH matrices correspond to 10,000 data points ($n=10,000$ in Eq.~\eqref{e:derivatives}).} 
\label{tab:compare}
\begin{tabular}{  c c  l  l  p{12mm}   p{12mm}   p{14mm}   p{14mm} } \toprule
 & & HM-low & HM-high & RSVD-low & RSVD-high &  K-FAC-low & K-FAC-high \\ \midrule
AE(a) & \%K & 1.23\%   &  11.77\%   &  1.40\%   &   12.14\%  &   1.40\%    &   12.14\%    \\
& $\epsilon_F$ & 1.7E-1  & 4.7E-4  &  4.3E-1  &  5.1E-3  & 1.2E-1  &  7.3E-2   \\ \midrule
AE(b) & \%K & 0.53\%   &  4.62\%   &  0.62\%   &  4.65\%   &    0.62\%   &   4.65\%    \\
& $\epsilon_F$ & 5.7E-3  & 6.4E-4  &  8.4E-1  &  2.3E-1  & 1.7E-1  &  3.8E-2    \\ \midrule
AE(c) & \%K & 0.28\%   &  2.31\%   &  0.32\%   &  2.38\%   &  0.32\%     &    2.38\%   \\
& $\epsilon_F$ & 4.2E-3  & 4.9E-4  &  9.1E-1  &  2.1E-1  &  1.6E-1   &   4.1E-2     \\
\bottomrule
\end{tabular}
\end{table}

As \tabref{tab:compare} shows, the \hmatrix{} approximation achieved higher accuracy than the RSVD and the K-FAC for most cases, especially for the high accuracy setting. For the RSVD, suppose the eigenvalues of the GHN matrix are $\{\sigma_i \}_i$, and the error of the rank-$k$ approximation measured in the Frobenius norm is proportional to $(\sum_{i>k} \sigma_i^2)^{1/2}$. For autoencoder (b) and (c), the spectrums of the GNH matrices decay slowly, so the RSVD is not efficient. For the K-FAC, the approximation that the expectation of a Kronecker product equals to the Kronecker product of expectations (Eq.~\eqref{kfac5}) is, in general, not exact, impeding the overall accuracy of the method.

\subsection{Memory savings} \label{s:memory}

\tabref{tab:GOFMMCompresJac} shows the memory footprint between our precomputation Eq.~\eqref{e:c} and the full GNH matrix, i.e., $\bigO(N^2)$. Recall \cref{th:th1} that the storage of our precomputation is $\bigO(nN)$, where $n$ is the number of data points.

As \tabref{tab:GOFMMCompresJac} shows, our precomputation leads to huge memory reduction compared with storing the full GNH matrix. This allows using the \gofmm{} method for networks that have a large number of parameters. For example, the storage of the GNH matrix for classifier (d) network requires more than 10 TB! But we were able to run \gofmm{} with the compressed storage (at the price of spending $\bigO(dn)$ work for the evaluation of every entry). \red{The precomputation of Eq.~\eqref{e:c} took merely about 2s and 7s, respectively.}

\begin{table}[htbp]
    \centering \small 
    \caption{\em Comparison of memory footprint (in single-precision) between our precomputation (Eq.~\eqref{e:c}) and the full GNH matrix. For each network, we show the compression rate and accuracy of \hmatrix{} approximations for two levels of accuracies. For both cases, the GNH matrices correspond to \red{$n=10,000$ data points}.}
    \label{tab:GOFMMCompresJac}
    \begin{tabular}{cccccrc} 
    \toprule
      & $N$ & $M_{\mathrm{ours}}$ & $M_{\mathrm{GNH}}$ & \texttt{accuracy} &    {\%K} & $\epsilon_F$  \\ 
    \midrule 
     classifier (c) & 219k &  191 MB & 193 GB &  \texttt{low}  & 0.165\% & \accnum{3.1E-01} \\
                         &          &&                                   & \texttt{high}  & 1.268\% & \accnum{4.2E-02} \\ \midrule
     classifier (d) & 1.6m  & 423 MB & 10.8 TB &  \texttt{low}  & 0.012\% & \accnum{1.2E-01} \\
                         &          &&                                   & \texttt{high}  & 0.177\% & \accnum{2.2E-02} \\
    \bottomrule
    \end{tabular}
\end{table}

\subsection{Fast Monte Carlo sampling} \label{s:res_sample}
We show the accuracy of our fast Monte Carlo sampling scheme.
The relative error measured in the Frobenius norm is between the exact GNH matrix $H$ and the approximation $\tilde{H}$ computed using \cref{alg:sampling} with a prescribed number of random samples. For reference, we also run the same sampling scheme but with a uniform probability distribution.

\begin{table}[htbp]
\centering
\caption{\em Accuracy of our fast Monte Carlo (FMC) sampling scheme. The error $\epsilon_K = \| H - \tilde{H} \|_F / \| H \|_F$, where $H$ and $\tilde{H}$ are the exact GNH matrix and its approximation computed using $K$ random samples in \cref{alg:sampling}, respectively. The exact GNH matrices correspond to \red{the AE (a) network with the entire MNIST dataset and the class (a) network with the entire CIFAR-10 training dataset, respectively.}
 The reference uniform sampling scheme uses a uniform sampling probability instead of Eq.~\eqref{e:probability} in \cref{alg:sampling}.}
\label{tab:element}
\begin{tabular}{ccccccc}
\toprule
& $n$  & \texttt{scheme}& $\epsilon_{10}$ & $\epsilon_{100}$  & $\epsilon_{1,000}$ &  $\epsilon_{10,000}$  \\ \midrule
MNIST & 60,000 & uniform & \accnum{0.3581} & \accnum{0.1127} & \accnum{0.0360} & \accnum{0.0113}  \\
            && FMC     &  \accnum{9.7e-3} & \accnum{3.1e-3} & \accnum{9.6e-4} & \accnum{3.1e-4} \\ \midrule
CIFAR-10 & 50,000 & uniform & \accnum{0.9731} & \accnum{0.3076} & \accnum{0.09812} & \accnum{0.03611}  \\
            && FMC     &  \accnum{0.6104} & \accnum{0.189} & \accnum{0.0611} & \accnum{0.0193} \\            
\bottomrule
\end{tabular}
\end{table}

As \tabref{tab:element} shows, when the number of random samples increases by $100\times$, the accuracy improves by $10\times$, which confirms the standard convergence rate of Monte Carlo in \cref{th:th2}. Importantly, the error bound and the convergence rate do \emph{not} depend on the problem size $n$. Moreover, our sampling scheme outperforms the uniform sampling by at most two orders of magnitude for the MNIST dataset. In other words, the uniform sampling requires $100\times$ more random samples to achieve about the same accuracy as our sampling scheme.






\paragraph{\hmatrix{} approximation with sampling} 
\tabref{tab:sampling} shows the error of \cref{alg:gofmm} for a sequence of increasingly large number of random samples. Recall that \cref{alg:gofmm} computes the \hmatrix{} approximation $\tilde{H}_{\gofmm{}}$ for the (inexact) GNH matrix, namely $\tilde{H}$ from \cref{alg:sampling} . The error between the \hmatrix{} approximation $\tilde{H}_{\gofmm{}}$ and the exact GNH matrix, namely $H$ is bounded as below \red{(using the triangular equality)}
\[
\| H - \tilde{H}_{\gofmm{}} \|_F = \| H - \tilde{H} + \tilde{H} - \tilde{H}_{\gofmm{}}  \|_F
\le \| H - \tilde{H} \|_F +  \| \tilde{H}  - \tilde{H}_{\gofmm{}} \|_F,
\]
where the first term is the sampling error, and the second term is the \hmatrix{} approximation error. As \tabref{tab:element} shows, the former converges to zero and is independent of the data size. \tabref{tab:sampling} shows that the latter also converges as the sampling becomes increasingly accurate, which justifies the overall approach.

\begin{table}[h]
     \caption{\em \hmatrix{} approximation with sampling. The exact GNH matrices correspond to \red{the AE (a) network with the entire MNIST dataset and the class (a) network with the entire CIFAR-10 training dataset, respectively}. The compression rate and the accuracy are shown for low- and high-accuracy settings, respectively.}
     \label{tab:sampling}
    \begin{subtable}[h]{1.0\textwidth}
        \centering
        \caption{\red{MNIST dataset ($n=60,000$ images)}}
        \label{tab:week1}
	\begin{tabular}{crcrcrcrc}
        \toprule 
        & \multicolumn{2}{c}{10 samples}  & \multicolumn{2}{c}{100 samples} & \multicolumn{2}{c}{1,000 samples} & \multicolumn{2}{c}{10,000 samples}  \\ \midrule
        \texttt{accuracy} &  {\%K} & $\epsilon_F$ &  {\%K} & $\epsilon_F$ &  {\%K} & $\epsilon_F$ &  {\%K} & $\epsilon_F$ \\ \midrule 
         \texttt{low} & 1.74\% & \accnum{2.8E-1} & 1.69\% & \accnum{1.3E-1} & 1.68\% & \accnum{1.4E-1}  & 1.68\% &\accnum{6.1E-2}  \\
          \texttt{high} & 17.1\% &\accnum{2.2E-2} & 16.9\% &\accnum{9.5E-3} & 16.6\% & \accnum{2.7E-3} & 16.2\% &\accnum{9.6E-4}  \\
        \bottomrule
        \end{tabular}       
    \end{subtable}
    \hfill
    \begin{subtable}[h]{1.0\textwidth}
        \centering
        \caption{\red{CIFAR-10 training dataset ($n=50,000$ images)}}
        \label{tab:week2}
        	\begin{tabular}{crcrcrcrc}
        \toprule 
        & \multicolumn{2}{c}{10 samples}  & \multicolumn{2}{c}{100 samples} & \multicolumn{2}{c}{1,000 samples} & \multicolumn{2}{c}{10,000 samples}  \\ \midrule
        \texttt{accuracy} &  {\%K} & $\epsilon_F$ &  {\%K} & $\epsilon_F$ &  {\%K} & $\epsilon_F$ &  {\%K} & $\epsilon_F$ \\ \midrule 
         \texttt{low} & 0.61\% & \accnum{9.7E-1} & 0.61\% & \accnum{5.3E-1} & 0.61\% & \accnum{4.1E-1}  & 0.61\% &\accnum{4.3E-1}  \\
          \texttt{high} & 4.83\% &\accnum{9.7E-1} & 4.83\% &\accnum{3.6E-1} & 4.83\% & \accnum{1.9E-1} & 4.83\% &\accnum{7.3E-2}  \\
        \bottomrule
        \end{tabular}        
     \end{subtable}
\end{table}

\section{Conclusions} \label{s:conclusions} \label{s:conclusion} 
We have presented a fast method to evaluate entries in the GNH matrix of the MLP network, and our method is consisted of two parts: a precomputation algorithm and a fast Monte Carlo algorithm. While the precomputation allows evaluating entries in the GNH matrix exactly with reduced storage, the random sampling is based on the precomputation and further accelerates the evaluation. Let $N$ be the number of weights, $n$ be the data size, and $d$ be the constant layer size. \red{Our scheme requires $\bigO(n+d/\epsilon^2)$ work for any entry in the GNH matrix $H$, where $\epsilon$ is the accuracy, whereas the worst case complexity to evaluate an entry exactly is $\bigO(Nn)$ through the matrx-free matvec. For example, the evaluation of $H_{N,N}$ would require $\bigO(Nn)$ work, while given our precomputation, it requires only $\bigO(n)$ work to compute a diagonal entry exactly (\cref{rmk}). One application of this fast diagonal evaluation would be computing all the diagonals of $H$ to precondition/accelerate the training of neural networks~\cite{yao2020adahessian}.} In this paper, we focused on applying the \gofmm{} to construct the \hmatrix{} approximation for the GNH matrix. As a result, we obtain an \hmatrix{} and its factorization for solving linear systems and eigenvalue problems with the GNH.


Two important directions for future research are (1) extending our method to other types of networks such as convolutional networks, where weight matrices are highly structured, (preliminary experiments on the VGG network show similar results as those in~\tabref{tab:GOFMMcomp}) 
and (2) incorporating our method in the context of a learning task, which would also require several algorithmic choices related to optimization, such as initialization, damping and adding momentum. 

\appendix 

\section{\red{McDiarmid’s Inequality}}\label{s:app}

\begin{theorem} 
Let $X_1,X_2,\ldots,X_n$ be independent random variables taking values in the set $\mathcal{X}$. If a mapping $F: \mathcal{X}^n \rightarrow \mathbb{R}$ satisfies
\[
\sup_{x_1,\ldots,x_i,x_i',\ldots,x_n} \left| F(x_1,\ldots,x_i,\ldots,x_n) - F(x_1,\ldots,x_i',\ldots,x_n) \right| \le \Delta_i, \quad \forall i
\]
where $x_1,\ldots,x_i,x_i',\ldots,x_n \in \mathcal{X}$, then for all $\epsilon > 0$,
\[
 \mbox{Pr}( f-\E[f] \ge \epsilon) \le  \mbox{exp} \left( {-2 \epsilon^2 \over \sum_{i=1}^n \Delta_i^2} \right).
\] 
\end{theorem}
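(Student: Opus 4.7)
The plan is to prove McDiarmid's inequality via the Doob martingale construction coupled with the standard Chernoff bounding argument. First I would define the filtration $\mathcal{F}_i = \sigma(X_1,\ldots,X_i)$ (with $\mathcal{F}_0$ trivial) and set $Z_i = \E[F(X_1,\ldots,X_n) \mid \mathcal{F}_i]$, so that $Z_0 = \E[F]$ and $Z_n = F(X_1,\ldots,X_n)$. The martingale differences $D_i = Z_i - Z_{i-1}$ then telescope to $\sum_{i=1}^n D_i = F - \E[F]$, reducing the deviation bound for $F$ to a tail bound on a sum of martingale differences.

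The crucial step is to show that, conditional on $\mathcal{F}_{i-1}$, the random variable $D_i$ lies almost surely in an interval of length at most $\Delta_i$. I would write $Z_i = g(X_i)$ for the function $g(x) := \E[F(X_1,\ldots,X_{i-1},x,X_{i+1},\ldots,X_n) \mid \mathcal{F}_{i-1}]$, which is well-defined as a deterministic function of $x$ because the $X_j$ are independent. Then $Z_{i-1} = \E[g(X_i) \mid \mathcal{F}_{i-1}]$ lies between $\inf_x g(x)$ and $\sup_x g(x)$, so $D_i$ is conditionally supported in $[\inf_x g(x) - \sup_{x'} g(x'),\ \sup_x g(x) - \inf_{x'} g(x')]$. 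The bounded differences hypothesis, combined with monotonicity of conditional expectation (pulling the sup and inf through the integral over $X_{i+1},\ldots,X_n$), yields $\sup_x g(x) - \inf_{x'} g(x') \le \Delta_i$, so $D_i$ has conditional range at most $\Delta_i$.

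With the conditional range bound in hand, I would finish via the standard Chernoff--Hoeffding argument. For any $t > 0$, Markov's inequality gives $\Pr(F - \E[F] \ge \epsilon) \le e^{-t\epsilon}\, \E[\exp(t \sum_{i=1}^n D_i)]$. I would peel off one factor at a time using the tower property: by Hoeffding's lemma, a centered random variable supported in an interval of length $\ell$ has moment generating function bounded by $\exp(t^2 \ell^2 / 8)$, hence $\E[e^{t D_i} \mid \mathcal{F}_{i-1}] \le \exp(t^2 \Delta_i^2 / 8)$. Iterating from $i=n$ down to $i=1$ yields $\Pr(F - \E[F] \ge \epsilon) \le \exp(-t\epsilon + t^2 \sum_i \Delta_i^2 / 8)$, and the optimal choice $t = 4\epsilon / \sum_i \Delta_i^2$ produces the stated bound $\exp(-2\epsilon^2 / \sum_i \Delta_i^2)$.

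The main obstacle I anticipate is the conditional range bound in the second paragraph: one must verify carefully that the supremum and infimum of $g(\cdot)$ differ by at most $\Delta_i$, which uses independence of the $X_j$ in an essential way (so that $g(x)$ really is a deterministic evaluation on the single coordinate $x$ with the earlier coordinates frozen and later coordinates integrated out). Once that reduction is in place, the remaining bookkeeping is the routine MGF argument, and a symmetric application yields the two-sided tail bound if needed.
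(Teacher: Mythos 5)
The paper does not prove this statement: McDiarmid's inequality is quoted in the appendix as a standard reference result (it is used only as an ingredient in the proof of the sampling-error theorem), so there is no in-paper argument to compare against. Your proposal is the canonical Doob-martingale proof and it is correct: the decomposition $F-\E[F]=\sum_i D_i$, the conditional bounded-range estimate via independence, Hoeffding's lemma applied conditionally, and the Chernoff optimization $t=4\epsilon/\sum_i\Delta_i^2$ all fit together as you describe and yield the stated constant $2$ in the exponent. One small point of care in your second paragraph: the interval $\left[\inf_x g(x)-\sup_{x'}g(x'),\ \sup_x g(x)-\inf_{x'}g(x')\right]$ that you display has length $2\left(\sup g-\inf g\right)$, which would cost you a factor of $4$ in the exponent if fed into Hoeffding's lemma; the statement you actually need (and correctly assert afterwards) is that, conditionally on $\mathcal{F}_{i-1}$, the quantity $Z_{i-1}$ is a \emph{constant} lying in $[\inf g,\sup g]$, so $D_i=g(X_i)-Z_{i-1}$ is supported in an interval of length exactly $\sup g-\inf g\le\Delta_i$. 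With that phrasing fixed, the argument is complete.
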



\bibliographystyle{siamplain}
\bibliography{M129961}

\end{document}